\tikzset{node distance=15mm, auto}
\let\temp\phi
\let\phi\varphi
\let\varphi\temp
\let\temp\epsilon
\let\epsilon\varepsilon
\let\varepsilon\temp
\let\temp\theta
\let\theta\vartheta
\let\vartheta\temp
\newif\ifdraft
\newcommand{\ie}{\emph{i.e.}}
\newcommand{\eg}{\emph{e.g.}}
\DeclareMathOperator{\id}{id}
\DeclareMathOperator{\fix}{fix}
\DeclareMathOperator{\pfix}{pfix}
\DeclareMathOperator{\op}{op}
\DeclareMathOperator{\Tr}{Tr}
\newcommand{\C}{\ensuremath{\mathscr{C}}}
\newcommand{\D}{\ensuremath{\mathscr{D}}}
\newcommand{\V}{\ensuremath{\mathscr{V}}}
\newcommand{\Poset}{\ensuremath{\mathbf{Poset}}}
\newcommand{\DStoch}{\ensuremath{\mathbf{DStoch}_{\le 1}}}
\newcommand{\Set}{\ensuremath{\mathbf{Set}}}
\newcommand{\DCPO}{\ensuremath{\mathbf{DCPO}}}
\newcommand{\CPO}{\ensuremath{\mathbf{CPO}}}
\newcommand{\DcpoOp}{\ensuremath{\mathbf{DcpoOp}}}
\newcommand{\Rel}{\ensuremath{\mathbf{Rel}}}
\newcommand{\PInj}{\ensuremath{\mathbf{PInj}}}
\newcommand{\FHilb}{\ensuremath{\mathbf{FHilb}}}
\newcommand{\Vect}{\ensuremath{\mathbf{Vect}}}
\newcommand{\CPS}{\ensuremath{\mathbf{CP}^*}}
\newcommand{\DagCat}{\ensuremath{\mathbf{DagCat}}}
\newcommand{\To}{\Rightarrow}
\newcommand{\tot}{\xrightarrow}
\newcommand{\Tot}{\xRightarrow}
\newcommand{\iso}{\cong}
\newcommand{\dle}{\sqsubseteq}
\renewcommand*{\dag}{\ensuremath{\dagger}}
\newcommand{\inv}{\overline}
\newcommand{\iinv}[1]{\overline{\overline{#1}}}
\begin{document}

\title{Inversion, Iteration, and the\\ Art of Dual Wielding\thanks{The
author would like to thank Martti Karvonen, Mathys Rennela, and Robert Glück
for their useful comments, corrections, and suggestions; and to acknowledge the
support given by \emph{COST Action IC1405 Reversible computation: Extending
horizons of computing.}}}
%\titlerunning{Abbreviated paper title}
\author{Robin Kaarsgaard\orcidID{0000-0002-7672-799X}}
\authorrunning{R. Kaarsgaard}

\institute{DIKU, Department of Computer Science, University of Copenhagen \\
\email{robin@di.ku.dk}}

\maketitle

\begin{abstract}
The humble $\dagger$ (``dagger'') is used to denote two different operations in
category theory: Taking the \emph{adjoint} of a morphism (in dagger categories)
and finding the \emph{least fixed point} of a functional (in categories
enriched in domains). While these two operations are usually considered
separately from one another, the emergence of reversible notions of computation
shows the need to consider how the two ought to interact.

In the present paper, we wield both of these daggers at once and consider
dagger categories enriched in domains. We develop a notion of a monotone dagger
structure as a dagger structure that is well behaved with respect to the
enrichment, and show that such a structure leads to pleasant inversion
properties of the fixed points that arise as a result. Notably, such a
structure guarantees the existence of \emph{fixed point adjoints}, which we
show are intimately related to the \emph{conjugates} arising from a canonical
involutive monoidal structure in the enrichment. Finally, we relate the results
to applications in the design and semantics of reversible programming
languages.

\keywords{reversible computing \and dagger categories \and iteration categories
\and domain theory \and enriched categories}
\end{abstract}

\section{Introduction} % (fold)
\label{sec:introduction}
Dagger categories are categories in which each morphism $X \tot{f} Y$ can be
assigned an \emph{adjoint} $Y \tot{f^\dagger} X$ subject to certain equations.
In recent years, dagger categories have been used to capture aspects of
\emph{inversion} in both reversible~\cite{James2012,James2014,Kaarsgaard2017}
and quantum~\cite{AbramskyCoecke2004,Selinger2007,Coecke2016} computing.
Likewise, domain theory and categories enriched in domains (see, \eg,
\cite{AbramskyJung1994,Esik2009,Esik2015,Adamek1995,Barr1992,SmythPlotkin1982})
have been successful since their inception in modelling both recursive
functions and data types in programming via \emph{fixed points}.

A motivating example of the interaction between adjoints and fixed points is
found in the reversible functional programming language
Rfun~\cite{Yokoyama2012}, as the interaction between program inversion and
recursion. In this language, inverses of recursive functions can be constructed
in a particularly straightforward way, namely as recursive functions with
function body the inverse of the function body of the original function.
Previously, the author and others showed that this phenomenon appears in
\emph{join inverse categories}, a particular class of domain-enriched dagger
categories suitable for modelling classical reversible computing, as
\emph{fixed point adjoints}~\cite{Kaarsgaard2017} to the functionals (\ie,
second-order continuous functions) used to model recursive functions.

Several questions remain about these fixed point adjoints, however. Notably:
Are these fixed point adjoints canonical? Why do they arise in classical
reversible computing, and do they arise elsewhere as well? To answer these
questions requires us to develop the art of wielding the two daggers offered by
dagger categories and domain-enriched categories at once. We argue that
well-behaved interaction between the dagger and domain-enrichments occurs when
the dagger is locally monotone, \ie, when $f \dle g$ implies $f^\dagger \dle
g^\dagger$. We show that the functionals on \C{} form an involutive monoidal
category, which also proves surprisingly fruitful in unifying seemingly
disparate concepts from the literature under the banner of \emph{conjugation of
functionals}. Notably, we show that the conjugate functionals arising from this
involutive structure coincide with fixed point adjoints~\cite{Kaarsgaard2017},
and that they occur naturally both in proving the ambidexterity of dagger
adjunctions~\cite{HeunenKarvonen2016} and in natural transformations that
preserve the dagger (including dagger traces~\cite{Selinger2011}).

While these results could be applied to model a reversible functional
programming language with general recursion and parametrized functions (such as
an extended version of Theseus~\cite{James2014}), they are general enough to
account for even certain probabilistic and nondeterministic models of
computation, such as the category \Rel{} of sets and relations, and the
category \DStoch{} of finite sets and subnormalized doubly stochastic maps.

\emph{Overview:} A brief introduction to the relevant background material on
dagger categories, (\DCPO-)enriched categories, iteration categories, and
involutive monoidal categories is given in Section~\ref{sec:background}. In
Section~\ref{sec:dcpo_dagger_categories} the concept of a \emph{monotone dagger
structure} on a \DCPO-category is introduced, and it is demonstrated that such
a structure leads to the existence of fixed point adjoints for (ordinary and
externally parametrized) fixed points, given by their conjugates. We also
explore natural transformations in this setting, and develop a notion of
\emph{self-conjugate} natural transformations, of which $\dagger$-trace
operators are examples. Finally, we discuss potential applications and avenues
for future research in Section~\ref{sec:applications}, and end with a few
concluding remarks in Section~\ref{sec:conclusion}.
% section introduction (end)

\section{Background} % (fold)
\label{sec:background}
Though familiarity with basic category theory, including monoidal categories, is
assumed, we recall here some basic concepts relating to dagger categories,
(\DCPO)-enriched categories, iteration categories, and involutive monoidal
categories~\cite{Jacobs2012,Beggs2009}. The material is only covered here
briefly, but can be found in much more detail in the numerous texts on dagger
category theory (see, \eg, \cite{Selinger2007,AbramskyCoecke2004,Heunen2009,Karvonen2019}), enriched category theory (for which \cite{Kelly1982} is the standard text), and domain
theory and iteration categories (see, \eg, \cite{AbramskyJung1994,Esik2015}).

\subsection{Dagger categories} % (fold)
\label{sub:dagger_categories}
A dagger category (or \dag-category) is a category equipped with a suitable
method for flipping the direction of morphisms, by assigning to each morphism
an \emph{adjoint} in a manner consistent with composition. They are formally
defined as follows.

\begin{definition}
A dagger category is a category $\C$ equipped with an functor $(-)^\dag
: \C^{\op} \to \C$ satisfying that $\id_X^\dagger = \id_X$ and
$f^{\dagger\dagger} = f$ for all identities $X \tot{\id_X} X$ and morphisms $X
\tot{f} Y$.
\end{definition}

Dagger categories, dagger functors (\ie, functors $F$ satisfying $F(f^\dag) =
F(f)^\dag$), and natural transformations form a 2-category, \DagCat.

A given category may have several different daggers which need not agree. An
example of this is the groupoid of finite-dimensional Hilbert spaces and linear
isomorphisms, which has (at least!) two daggers: One maps linear isomorphisms
to their linear inverse, the other maps linear isomorphisms to their hermitian
conjugate. The two only agree on the unitaries, \ie, the linear isomorphisms
which additionally preserve the inner product. For this reason, one would in
principle need to specify \emph{which} dagger one is talking about on a given
category, though this is often left implicit (as will also be done here).

Let us recall the definition of the some interesting properties of morphisms in
a dagger category: By theft of terminology from linear algebra, say that a
morphism $X \tot{f} X$ in a dagger category is \emph{hermitian} or
\emph{self-adjoint} if $f = f^\dag$, and \emph{unitary} if it is an isomorphism
and $f^{-1} = f^\dag$. Whereas objects are usually considered equivalent if
they are isomorphic, the ``way of the
dagger''~\cite{HeunenKarvonen2016,Karvonen2019} dictates that all structure in
sight must cooperate with the dagger; as such, objects ought to be considered
equivalent in dagger categories only if they are isomorphic via a unitary map.

We end with a few examples of dagger categories. As discussed above, \FHilb{}
is an example (\emph{the} motivating one, even~\cite{Selinger2007}) of dagger
categories, with the dagger given by hermitian conjugation. The category
\PInj{} of sets and partial injective functions is a dagger category (indeed,
it is an \emph{inverse category}~\cite{Kastl1979,Cockett2002}) with $f^\dag$
given by the partial inverse of $f$. Similarly, the category \Rel{} of sets and
relations has a dagger given by $R^\dag = R^{\circ}$, \ie, the relational
converse of $R$. Noting that a dagger subcategory is given by the existence of
a faithful dagger functor, it can be shown that \PInj{} is a dagger subcategory
of \Rel{} with the given dagger structures.
% subsection dagger_categories (end)

\subsection{\DCPO-categories and other enriched categories} % (fold)
\label{sub:_dcpo_categories_and_other_enriched_categories}
Enriched categories (see, \eg, \cite{Kelly1982}) capture the idea that homsets
on certain categories can (indeed, ought to) be understood as something other
than sets -- or in other words, as objects of another category than \Set. A
category \C{} is \emph{enriched} in a monoidal category \V{} if all homsets
$\C(X,Y)$ of \C{} are objects of \V{}, and for all objects $X,Y,Z$ of \C{},
\V{} has families of morphisms $\C(Y,Z) \otimes \C(X,Y) \to \C(X,Z)$ and $I \to
\C(X,X)$ corresponding to composition and identities in \C, subject to
commutativity of diagrams corresponding to the usual requirements of
associativity of composition, and of left and right identity. As is common, we
will often use the shorthand ``\C{} is a \V-category'' to mean that \C{} is
enriched in the category \V.

We focus here on categories enriched in the category of \emph{domains} (see,
\eg, \cite{AbramskyJung1994}), \ie, the category \DCPO{} of pointed directed
complete partial orders and continuous maps. A partially ordered $(X, \dle)$ is
said to be directed complete if every directed set (\ie, a \emph{non-empty} $A
\subseteq X$ satisfying that any pair of elements of $A$ has a supremum in $A$) has a supremum in $X$. A function $f$ between directed
complete partial orders is monotone if $x \dle y$ implies $f(x) \dle f(y)$ for
all $x,y$, and continuous if $f(\sup A) = \sup_{a \in A} \{f(a)\}$ for each
directed set $A$ (note that continuity implies monotony). A directed complete
partial order is \emph{pointed} if it has a least element $\bot$ (or, in other
words, if also the empty set has a supremum), and a function $f$
between such is called \emph{strict} if $f(\bot) = \bot$ (\ie, if also the
supremum of the empty set is preserved\footnote{This is \emph{not} the
case in general, as continuous functions are only required to preserve least
upper bounds of directed sets, which, by definition, does not include the empty
set.}). Pointed directed complete partial orders and continuous maps form a
category, \DCPO.

As such, a category enriched in \DCPO{} is a category \C{} in which homsets
$\C(X,Y)$ are directed complete partial orders, and composition is continuous.
Additionally, we will require that composition is strict (meaning that $\bot
\circ f = \bot$ and $g \circ \bot = \bot$ for all suitable morphisms $f$ and
$g$), so that the category is actually enriched in the category \DCPO! of
directed complete partial orders and strict continuous functions, though we
will not otherwise require functions to be strict.

Enrichment in \DCPO{} provides a method for constructing morphisms in the
enriched category as least fixed points of continuous functions between
homsets: This is commonly used to model recursion. Given a
continuous function $\C(X,Y) \tot{\phi} \C(X,Y)$, by Kleene's fixed point
theorem there exists a least fixed point $X \tot{\fix \phi} Y$ given by
$\sup_{n \in \omega}\{\phi^n(\bot)\}$, where $\phi^n$ is the $n$-fold
composition of $\phi$ with itself.
% subsection _dcpo_categories_and_other_enriched_categories (end)

\subsection{Parametrized fixed points and iteration categories} % (fold)
\label{sub:iteration_categories}
Related to the fixed point operator is the \emph{parametrized fixed point
operator}, an operator $\pfix$ assigning morphisms of the form $X \times Y
\tot{\psi} X$ to a morphism $Y \tot{\pfix \psi} X$ satisfying equations such as
the \emph{parametrized fixed point identity}
$$
\pfix \psi = \psi \circ \langle \pfix \psi, \id_Y \rangle
$$
and others (see, \eg, \cite{Hyland2008,Esik2009}). Parametrized fixed points
are used to solve domain equations of the form $x = \psi(x,p)$ for some given
parameter $p \in Y$. Indeed, if for a continuous function $X \times Y
\tot{\psi} X$ we define $\psi^0(x,p) = x$ and $\psi^{n+1}(x,p) =
\psi(\psi^n(x,p),p)$, we can construct its parametrized fixed point in \DCPO{} in a way reminiscent of the usual fixed point by
$$
  (\pfix \psi)(p) = \sup_{n \in \omega} \{\psi^n(\bot_X, p) \} \enspace.
$$
In fact, a parametrized fixed point operator may be derived from an ordinary
fixed point operator by $(\pfix \psi)(p) = \fix \psi(-,p)$. Similarly, we may
derive an ordinary fixed point operator from a parametrized one by considering
a morphism $X \tot{\phi} X$ to be parametrized by the terminal object $1$, so
that the fixed point of $X \tot{\phi} X$ is given by the parametrized fixed
point of $X \times 1 \tot{\pi_1} X \tot{\phi} X$.

The parametrized fixed point operation is sometimes also called a \emph{dagger
operation}~\cite{Esik2009}, and denoted by $f^\dag$ rather than $\pfix f$.
Though this is indeed the other dagger that we are wielding, we will use the
phrase ``parametrized fixed point'' and notation ``$\pfix$'' to avoid
unnecessary confusion.

An \emph{iteration category}~\cite{Esik2015} is a cartesian category with a
parametrized fixed point operator that behaves in a canonical way. The
definition of an iteration category came out of the observation that the
parametrized fixed point operator in a host of concrete categories (notably
\DCPO{}) satisfy the same identities. This lead to an elegant semantic
characterization of iteration categories, due to \cite{Esik2015}.

\begin{definition}
  An \emph{iteration category} is a cartesian category with a parametrized 
  fixed point operator satisfying all identities (of the parametrized fixed 
  point operator) that hold in \DCPO.
\end{definition}

Note that the original definition defined iteration categories in relation to
the category $\mathbf{CPO}_m$ of $\omega$-complete partial orders and monotone
functions, rather than to \DCPO. However, the motivating theorem~\cite[Theorem
1]{Esik2015} shows that the parametrized fixed point operator in $\CPO_m$
satisfies the same identities as the one found in \CPO{} (\ie, with continuous
rather than monotone functions). Since the parametrized fixed point operator of
\DCPO{} is constructed \emph{precisely} as it is in \CPO{} (noting that
$\omega$-chains are directed sets), this definition is equivalent to the
original.
% subsection iteration_categories (end)

\subsection{Involutive monoidal categories} % (fold)
\label{sub:involutive_monoidal_categories}
An involutive category~\cite{Jacobs2012} is a category in which every object
$X$ can be assigned a \emph{conjugate} object $\inv{X}$ in a functorial way
such that $\iinv{X} \iso X$. A novel idea by Egger~\cite{Egger2008} is to
consider dagger categories as categories enriched in an \emph{involutive
monoidal category}. We will return to this idea in
Section~\ref{sub:the_category_of_continuous_functionals}, and recall the
relevant definitions in the meantime (due to \cite{Jacobs2012}, compare also
with \emph{bar categories}~\cite{Beggs2009}).

\begin{definition}
A category \V{} is \emph{involutive} if it is equipped with a functor $\V
\tot{\inv{(-)}} \V$ (the \emph{involution}) and a natural isomorphism $\id
\Tot{\iota} \iinv{(-)}$ satisfying $\iota_{\inv{X}} = \inv{\iota_X}$.
\end{definition}

Borrowing terminology from linear algebra, we call $\inv{X}$ (respectively
$\inv{f}$) the \emph{conjugate} of an object $X$ (respectively a morphism $f$),
and say that an object $X$ is \emph{self-conjugate} if $X \iso \inv{X}$. Note
that since conjugation is covariant, any category \C{} can be made involutive
by assigning $\inv{X} = X$, $\inv{f} = f$, and letting $\id \Tot{\iota}
\iinv{(-)}$ be the identity in each component; as such, an involution is a
structure rather than a property. Non-trivial examples of involutive categories
include the category of complex vector spaces $\Vect_{\mathbb{C}}$, with the
involution given by the usual conjugation of complex vector spaces; and the
category \Poset{} of partially ordered sets and monotone functions, with the
involution given by order reversal.

When a category is both involutive and (symmetric) monoidal, we say that it is
an \emph{involutive (symmetric) monoidal category} when these two structures
play well together, as in the following definition~\cite{Jacobs2012}.

\begin{definition}
An \emph{involutive (symmetric) monoidal category} is a (symmetric)
monoidal category \V{} which is also involutive, such that the involution is a
monoidal functor, and $\id \To \iinv{(-)}$ is a monoidal natural isomorphism.
\end{definition}

This specifically gives us a natural family of isomorphisms $\inv{X \otimes Y}
\iso \inv{X} \otimes \inv{Y}$, and when the monoidal product is symmetric, this
extends to a natural isomorphism $\inv{X \otimes Y} \iso \inv{Y} \otimes
\inv{X}$. This fact will turn out to be useful later on when we consider dagger
categories as enriched in certain involutive symmetric monoidal categories.
% section background (end)

\section{Domain enriched dagger categories} % (fold)
\label{sec:dcpo_dagger_categories}
Given a dagger category that also happens to be enriched in domains, we ask how
these two structures ought to interact with one another. Since domain theory
dictates that the well-behaved functions are precisely the continuous ones, a
natural first answer would be to that the dagger should be locally continuous;
however, it turns out that we can make do with less.

\begin{definition}
  Say that a dagger structure on \DCPO-category is \emph{monotone} if the 
  dagger is locally monotone, \ie, if $f \dle g$ implies $f^\dagger \dle
  g^\dagger$ for all $f$ and $g$.
\end{definition}

In the following, we will use the terms ``\DCPO-category with a monotone dagger
structure'' and ``\DCPO-\dag-category'' interchangably. That this is sufficient
to get what we want -- in particular to obtain local continuity of the dagger
-- is shown in the following lemma.

\begin{lemma}
  In any \DCPO-$\dagger$-category, the dagger is an order
  isomorphism on morphisms; in particular it is continuous and strict.
\end{lemma}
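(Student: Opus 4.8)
The plan is to observe that the dagger, restricted to a single homset, is a monotone bijection whose inverse is again the dagger, and then to invoke the standard fact that an order isomorphism between dcpos automatically preserves every supremum that exists.

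First I would fix objects $X$ and $Y$ and consider the component map $\C(X,Y) \tot{(-)^\dagger} \C(Y,X)$. The involutive law $f^{\dagger\dagger} = f$ says precisely that composing this map with the analogous component $\C(Y,X) \tot{(-)^\dagger} \C(X,Y)$ yields the identity in both directions; hence $(-)^\dagger$ is a bijection whose inverse is again given by $(-)^\dagger$. By the monotone dagger assumption, both this map and its inverse are monotone, since $f \dle g$ implies $f^\dagger \dle g^\dagger$ and symmetrically. A monotone bijection with monotone inverse is exactly an order isomorphism, which establishes the first claim.

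It then remains to deduce continuity and strictness, and here I would appeal to the general observation that any order isomorphism $h$ between posets preserves every supremum that exists: if $s = \sup A$, then $h(s)$ is an upper bound of $h(A)$ by monotonicity of $h$, while any upper bound $u$ of $h(A)$ satisfies $s \dle h^{-1}(u)$ by monotonicity of $h^{-1}$, whence $h(s) \dle u$; thus $h(s) = \sup h(A)$. Applying this to directed sets gives $\left(\sup A\right)^\dagger = \sup_{a \in A}\{a^\dagger\}$, which is continuity, and applying it to the empty set, whose supremum is $\bot$ since the homsets are pointed, gives $\bot^\dagger = \bot$, which is strictness.

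The argument is essentially immediate once the right viewpoint is adopted, so I expect no serious obstacle. The only point requiring care is the realization that continuity need not be assumed separately: because the dagger is its own inverse, local monotonicity alone forces it to be an order isomorphism, and order isomorphisms preserve arbitrary suprema, not merely directed ones, for free. This is exactly what upgrades the modest hypothesis of monotonicity to the stronger conclusions of continuity and strictness.
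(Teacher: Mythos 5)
Your proof is correct and takes essentially the same approach as the paper: the involution law $f^{\dagger\dagger} = f$ exhibits the dagger as a self-inverse bijection between homsets, local monotony applied in both directions makes it an order isomorphism, and continuity and strictness follow because order isomorphisms preserve all existing suprema. The only difference is that you explicitly spell out the supremum-preservation step (including the empty supremum for strictness), which the paper leaves implicit behind the phrase ``in particular.''
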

\begin{proof}
  For \C{} a dagger category, $\C \iso \C^{\op}$ so $\C(X,Y) \iso \C^{\op}(X,Y) 
  = \C(Y,X)$ for all objects $X,Y$; that this isomorphism of hom-objects is an 
  order isomorphism follows directly by local monotony.\qed
  \iffalse
  Let \C{} be a \DCPO-$\dagger$-category, so $\C^\text{op} \tot{\dagger} \C{}$
  is locally monotone. As \C{} is a $\dagger$-category, the dagger is an
  isomorphism of categories -- specifically so on morphisms, i.e., $\C(X,Y)
  \iso \C^\text{op}(X,Y) = \C(Y,X)$ for all $X, Y$. For now, call this family
  of isomorphisms $\C(X,Y) \tot{\iota_{X,Y}} \C(Y,X)$. By local monotony,
  $\iota_{X,Y}$ is monotone in each component, and since $\iota_{X,Y}^{-1} =
  \iota_{Y,X}$ by $f^{\dagger\dagger} = f$ for all $X \tot{f} Y$, its inverse
  is monotone as well. But then $\iota_{X,Y}$ is precisely an order
  isomorphism, concluding the proof.
  \fi
\end{proof}

Let us consider a few examples of \DCPO-\dag-categories.

\begin{example}
  The category \Rel{} of sets and relations is a dagger category, with the
  dagger given by $R^\dag = R^{\circ}$, the relational converse of $R$ (\ie,
  defined by $(y,x) \in R^{\circ}$ iff $(x,y) \in R$) for each such relation. It
  is also enriched in \DCPO{} by the usual subset ordering: Since a relation
  $\mathcal{X} \to \mathcal{Y}$ is nothing more than a subset of $\mathcal{X}
  \times \mathcal{Y}$, equipped with the subset order $- \subseteq -$ we have 
  that $\sup(\Delta) = \bigcup_{R \in \Delta} R$ for any directed set $\Delta
  \subseteq \Rel(\mathcal{X}, \mathcal{Y})$. It is also pointed, with the least 
  element of each homset given by the empty relation.
  
  To see that this is a monotone dagger structure, let $\mathcal{X} \tot{R,S}
  \mathcal{Y}$ be relations and suppose that $R \subseteq S$. Let $(y,x) \in
  R^{\circ}$. % be arbitrarily chosen. 
  Since $(y,x) \in R^{\circ}$ we have $(x,y)
  \in R$ by definition of the relational converse, and by the assumption that
  $R \subseteq S$ we also have $(x,y) \in S$. But then $(y,x) \in S^{\circ}$ by
  definition of the relational converse, %and since $(y,x)$ was arbitrarily
  %chosen, 
  so $R^\dag = R^{\circ} \subseteq S^{\circ} = S^\dag$ follows by
  extensionality.
\end{example}

\begin{example}
  We noted earlier that the category \PInj{} of sets and partial injective
  functions is a dagger subcategory of \Rel, with $f^\dag$ given by the partial
  inverse (a special case of the relational converse) of a partial injection
  $f$. Further, it is also a \DCPO-subcategory of \Rel; in \PInj, this becomes
  the relation that for $X \tot{f,g} Y$, $f \dle g$ iff for all $x \in X$, if
  $f$ is defined at $x$ and $f(x) = y$, then $g$ is also defined at $x$ and
  $g(x) = y$. Like \Rel{}, it is pointed with the nowhere defined partial
  function as the least element of each homset. That $\sup(\Delta)$ for some
  directed $\Delta \subseteq \PInj(X,Y)$ is a partial injection follows
  straightforwardly, and that this dagger structure is monotone follows by an
  argument analogous to the one for \Rel.
\end{example}

\begin{example}
  More generally, any \emph{join inverse category} (see \cite{Guo2012}), of 
  which \PInj{} is one, is a \DCPO-\dag-category. Inverse categories are 
  canonically dagger categories enriched in partial orders. That this 
  extends to \DCPO-enrichment in the presence of joins is shown in 
  \cite{Kaarsgaard2017}; that the canonical dagger is 
  monotonous with respect to the partial order is an elementary result (see,
  \eg, \cite[Lemma 2]{Kaarsgaard2017}).
\end{example}

\begin{example}
  The category \DStoch{} of finite sets and \emph{subnormalized doubly 
  stochastic maps} is
  an example of a probabilistic \DCPO-\dag-category. A subnormalized doubly
  stochastic map
  $X \tot{f} Y$, where $|X| = |Y| = n$, is given by an $n \times n$ matrix $A =
  [a_{ij}]$ with non-negative real entries such that $\sum_{i=1}^n a_{ij} \le
  1$ and $\sum_{j = 1}^n a_{ij} \le 1$. Composition is given by the usual
  multiplication of matrices. 
  
  This is a dagger category with the dagger given
  by matrix transposition. It is also enriched in \DCPO{} by ordering 
  subnormalized doubly stochastic maps entry-wise (\ie, $A \le B$ if $a_{ij}
  \le b_{ij}$ for all $i,j$), with the everywhere-zero matrix as the least
  element in each homset, and with suprema of directed sets given by computing
  suprema entry-wise. That this dagger structure is monotone follows by the
  fact that if $A \le B$, so $a_{ij} \le b_{ij}$ for all $i,j$, then also
  $a_{ji} \le b_{ji}$ for all $j,i$, which is precisely to say that $A^\dag =
  A^T \le B^T = B^\dag$.
\end{example}

As such, in terms of computational content, these are examples of deterministic,
nondeterministic, and probabilistic \DCPO-\dag-categories. We will
also discuss the related category $\CPS(\FHilb)$, used to
model quantum phenomena, in Section~\ref{sec:applications}.

\subsection{The category of continuous functionals} % (fold)
\label{sub:the_category_of_continuous_functionals} 
We illustrate here the idea of dagger categories as categories enriched in an
involutive monoidal category by an example that will be used throughout the
remainder of this article: Enrichment in a suitable subcategory of \DCPO. It is
worth stressing, however, that the construction is \emph{not} limited to dagger
categories enriched in \DCPO; any dagger category will do. As we will see
later, however, this canonical involution turns out to be very useful when
\DCPO-\dag-categories are considered.

Let \C{} be a \DCPO-\dag-category. We define an induced (full monoidal)
subcategory of \DCPO, call it $\DcpoOp(\C)$, which enriches \C{} (by its
definition) as follows:

\begin{definition}
  For a \DCPO-\dag-category \C, define $\DcpoOp(\C)$ to have as objects all
  objects $\Theta, \Lambda$ of \DCPO{} of the form $\C(X,Y)$, $\C^{\op}(X,Y)$
  (for all objects $X,Y$ of \C), $1$, and $\Theta \times \Lambda$ (with $1$
  initial object of \DCPO, and $- \times -$ the cartesian product), and as
  morphisms all continuous functions between these.
\end{definition}

In other words, $\DcpoOp(\C)$ is the (full) cartesian subcategory of \DCPO{}
generated by objects used in the enrichment of \C, with all continuous maps
between these. That the dagger on \C{} induces an involution on $\DcpoOp(\C)$
is shown in the following theorem.

\begin{theorem}
  $\DcpoOp(\C)$ is an involutive symmetric monoidal category.
\end{theorem}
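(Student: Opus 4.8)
The plan is to equip $\DcpoOp(\C)$ with the cartesian monoidal structure it inherits from \DCPO{} and to build its involution directly out of the dagger. Since $\DcpoOp(\C)$ is by construction a full subcategory of \DCPO{} that contains the unit $1$ and is closed under binary products $\times$, the symmetric monoidal (indeed cartesian) structure of \DCPO{} restricts to it with no extra work; the only real content of the theorem is the involution and its compatibility with this structure.

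For the involution, first I would define it on the generating objects by $\inv{\C(X,Y)} = \C(Y,X)$ (noting that $\C(Y,X) = \C^{\op}(X,Y)$ is again an object of $\DcpoOp(\C)$), $\inv{1} = 1$, and $\inv{\Theta \times \Lambda} = \inv{\Theta} \times \inv{\Lambda}$, so that $\inv{(-)}$ strictly preserves the monoidal product and unit. To define it on morphisms, I would attach to each object $\Theta$ a canonical comparison map $d_\Theta \colon \Theta \to \inv{\Theta}$, namely $d_{\C(X,Y)} = (-)^\dagger$ (the hom-level dagger $\C(X,Y) \to \C(Y,X)$), $d_1 = \id_1$, and $d_{\Theta \times \Lambda} = d_\Theta \times d_\Lambda$, and then set $\inv{\phi} = d_\Lambda \circ \phi \circ d_\Theta^{-1}$ for $\phi \colon \Theta \to \Lambda$. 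This is exactly conjugation of a functional by the dagger. The crucial input here is the preceding lemma: because the dagger is an order isomorphism that is in particular continuous, each $d_\Theta$ is an isomorphism in \DCPO, so $\inv{\phi}$ is again a continuous map and hence a genuine morphism of $\DcpoOp(\C)$. Functoriality ($\inv{\id} = \id$ and $\inv{\psi \circ \phi} = \inv{\psi} \circ \inv{\phi}$) then follows by inserting $d_\Theta^{-1} \circ d_\Theta = \id$ in the middle.

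Next I would verify the involutive axioms. The identity $f^{\dagger\dagger} = f$ gives $d_{\inv{\Theta}} = d_\Theta^{-1}$ on generators and hence on all objects, from which a short computation yields $\iinv{\Theta} = \Theta$ and $\iinv{\phi} = \phi$; that is, $\iinv{(-)}$ is literally the identity functor. I can therefore take $\iota \colon \id \To \iinv{(-)}$ to be the identity natural transformation, which is trivially a natural isomorphism and trivially satisfies $\iota_{\inv{X}} = \inv{\iota_X}$. For the monoidal compatibility, since $\inv{(-)}$ strictly preserves $\times$ and $1$ on objects and, by functoriality of $\times$, also satisfies $\inv{\phi \times \psi} = \inv{\phi} \times \inv{\psi}$ and $\inv{\id_1} = \id_1$ on morphisms, it is a (strict, symmetric) monoidal functor whose structure isomorphisms are identities and which commutes with the cartesian symmetry; the identity $\iota$ is then automatically a monoidal natural isomorphism. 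Assembling these pieces shows that $\DcpoOp(\C)$ is an involutive symmetric monoidal category.

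The step I expect to require the most care is ensuring that the involution is well-defined and genuinely functorial into the subcategory — concretely, that conjugation by the dagger sends continuous functionals to continuous functionals. This is precisely where the monotone dagger hypothesis is used through the lemma: monotonicity alone guarantees that $d_\Theta$ is an order isomorphism, but it is the upgraded continuity of the dagger that makes each $\inv{\phi}$ a morphism of \DCPO. A secondary subtlety is that an object such as $\C(X,Y)$ may coincide, as a bare pointed dcpo, with another hom-object or with a product; I would handle this by treating the generators as formally tagged (equivalently, regarding $\DcpoOp(\C)$ up to the evident equivalence), so that $d_\Theta$, and hence $\inv{(-)}$, are unambiguously defined.
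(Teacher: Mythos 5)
Your proposal is correct and takes essentially the same approach as the paper: the same object-level involution, the same conjugation formula $\inv{\phi} = d_\Lambda \circ \phi \circ d_\Theta^{-1}$ (your $d$ is the paper's family $\xi$), and the identity natural transformation as $\iota$, with the monoidal compatibility noted to be strict. The only difference is bookkeeping: where you handle possible coincidences of hom-objects by formally tagging the generators, the paper instead verifies directly that the object assignment $\inv{\C(X,Y)} = \C^{\op}(X,Y)$ is independent of the presentation, using that $\C^{\op}(X,Y) = \{f^\dag \mid f \in \C(X,Y)\}$.
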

\begin{proof}
  On objects, define an involution $\inv{(-)}$ with respect to the cartesian 
  (specifically symmetric monoidal) product of \DCPO{} as follows, for all 
  objects $\Theta,\Lambda,\Sigma$ of $\DcpoOp(\C)$: $\inv{\C(X,Y)} = 
  \C^{\op}(X,Y)$,
  $\inv{\C^{\op}(X,Y)} = \C(X,Y)$, $\inv{1} = 1$, and $\inv{\Theta \times
  \Lambda} = \inv{\Theta} \times \inv{\Lambda}$.
  To see that this is well-defined, recall that $\C \iso \C^{\op}$ for any
  dagger category $\C$, so in particular there is an isomorphism witnessing
  $\C(X,Y) \iso \C^{\op}(X,Y)$ given by the mapping $f \mapsto f^\dag$. But
  then $\C^{\op}(X,Y) = \{f^\dag \mid f \in \C(X,Y)\}$, so if $\C(X,Y) =
  \C(X',Y')$ then $\inv{\C(X,Y)} = \C^{\op}(X,Y) = \{f^\dag \mid f \in
  \C(X,Y)\} = \{f^\dag \mid f \in \C(X',Y')\} = \C^{\op}(X',Y') =
  \inv{\C(X',Y')}$. That $\inv{\C^{\op}(X,Y)} = \C(X,Y)$ is well-defined 
  follows by analogous argument.
  
  On morphisms, we define a
  family $\xi$ of isomorphisms by $\xi_I = \id_I$, $\xi_{\C(X,Y)} =
  (-)^\dagger$, $\xi_{\C^{\op}(X,Y)} = (-)^\dagger$, and $\xi_{\Theta \times
  \Lambda} = \xi_\Theta \times \xi_\Lambda$, and then define
  $$\inv{\Theta \tot{\phi} \Lambda} = \inv{\Theta} \tot{\xi_\Theta^{-1}} \Theta 
  \tot{\phi} \Lambda \tot{\xi_\Lambda} 
  \inv{\Lambda}.$$
  This is functorial as $\inv{\id_\Theta} = \xi_\Theta \circ \id_\Theta \circ \xi_\Theta^{-1} = 
  \xi_\Theta \circ \xi_\Theta^{-1} = \id_{\inv{\Theta}}$, and for $\Theta \tot{\phi} \Lambda \tot{\psi} \Sigma$, 
  $$\inv{\psi \circ \phi} = \xi_\Sigma \circ \psi \circ \phi \circ 
  \xi_\Theta^{-1} = \xi_\Sigma \circ \psi \circ \xi_\Lambda^{-1} \circ \xi_\Lambda \circ \phi \circ \xi_\Theta^{-1} = \inv{\psi} \circ 
  \inv{\phi}.$$ 
  Finally, since the involution is straightforwardly a monoidal functor, and
  since the natural transformation $\id \To \iinv{(-)}$ can be chosen to be the
  identity since all objects of $\DcpoOp(\C)$ satisfy $\iinv{\Theta} = \Theta$ 
  by definition, this is an involutive symmetric monoidal category.\qed
\end{proof}

The resulting category $\DcpoOp(\C)$ can very naturally be thought of as the
induced \emph{category of (continuous) functionals} (or second-order functions)
of \C.

Notice that this is a special case of a more general construction on dagger
categories: For a dagger category \C{} enriched in some category \V{} (which
could simply be \Set{} in the unenriched case), one can construct the category
$\V\mathbf{Op}(\C)$, given on objects by the image of the hom-functor $\C(-,-)$
closed under monoidal products, and on morphisms by all morphisms of \V{}
between objects of this form. Defining the involution as above, $\V\mathbf{Op}(\C)$ can be shown to be involutive monoidal. 

\begin{example}
  One may question how natural (in a non-technical sense) the choice of
  involution on $\DcpoOp(\C)$ is. One instance where it turns out to be useful 
  is in the context of dagger adjunctions (see \cite{HeunenKarvonen2016} for
  details), that is, adjunctions between dagger categories where both functors
  are dagger functors. 
  
  Dagger adjunctions have no specified left and right adjoint, as all such
  adjunctions can be shown to be ambidextrous in the following way: Given $F
  \dashv G$ between endofunctors on \C, there is a natural isomorphism
  $\C(FX, Y) \tot{\alpha_{X,Y}} \C(X, GY)$. Since \C{} is a dagger category, we
  can define a natural isomorphism $\C(X, FY) \tot{\beta_{X,Y}} \C(GX, Y)$ by
  $f \mapsto \alpha_{Y,X}(f^\dag)^\dag$, \ie, by the composition
  $$
    \C(X, FY) \tot{\xi} \C(FY, X) \tot{\alpha_{Y,X}} \C(Y, GX) \tot{\xi} 
    \C(GX, Y)
  $$
  which then witnesses $G \dashv F$ (as it is a composition of natural
  isomorphisms). But then $\beta_{X,Y}$ is defined precisely to be
  $\inv{\alpha_{Y,X}}$ when $F$ and $G$ are endofunctors.
\end{example}
% subsection the_category_of_continuous_functionals (end)

\subsection{Daggers and fixed points} % (fold)
\label{sub:daggers_and_fixed_points}
In this section we consider the morphisms of $\DcpoOp(\C)$ in some detail, for
a \DCPO-\dag-category \C. Since least fixed points of morphisms are such a
prominent and useful feature of \DCPO-enriched categories, we ask how these
behave with respect to the dagger. To answer this question, we transplant the notion of a \emph{fixed point
adjoint} from \cite{Kaarsgaard2017} to \DCPO-\dag-categories, where
an answer to this question in relation to the more specific \emph{join inverse
categories} was given: 
\begin{definition}
A functional $\C(Y,X) \tot{\phi_\ddagger} \C(Y,X)$ is \emph{fixed point
adjoint} to a functional $\C(X,Y) \tot{\phi} \C(X,Y)$ iff $(\fix \phi)^\dag =
\fix \phi_\ddagger$.
\end{definition}
Note that this is symmetric: If $\phi_\ddagger$ is fixed point adjoint to
$\phi$ then $\fix (\phi_\ddagger)^\dag = (\fix \phi)^{\dag\dag} = \fix \phi$,
so $\phi$ is also fixed point adjoint to $\phi_\ddagger$. As shown in the
following theorem, it turns out that the conjugate $\inv{\phi}$ of a functional
$\phi$ is precisely fixed point adjoint to it. This is a generalization of a
theorem from \cite{Kaarsgaard2017}, where a more ad-hoc formulation was shown for join inverse categories, which constitute a non-trivial subclass of \DCPO-\dag-categories.

\begin{theorem}
  Every functional is fixed point adjoint to its conjugate.
\end{theorem}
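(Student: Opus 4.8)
The plan is to show directly that $(\fix \phi)^\dagger = \fix \inv{\phi}$, which by definition establishes that $\inv{\phi}$ is fixed point adjoint to $\phi$. Unfolding the definitions, I have $\fix \phi = \sup_{n \in \omega}\{\phi^n(\bot)\}$ by Kleene's fixed point theorem, and the conjugate functional $\inv{\phi} = \xi \circ \phi \circ \xi^{-1}$ where, per the construction of $\DcpoOp(\C)$, the isomorphism $\xi$ is precisely the dagger $(-)^\dagger$ on the relevant hom-objects. The key observation is that conjugation acts by pre- and post-composing with the dagger, so computing $\fix \inv{\phi}$ amounts to iterating $\phi$ up to conjugation by $\xi$.

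First I would establish the clean algebraic fact that $\inv{\phi}^n = \xi \circ \phi^n \circ \xi^{-1}$ for all $n$, by an easy induction: the cross terms $\xi^{-1} \circ \xi$ telescope away, exactly as in the functoriality computation $\inv{\psi \circ \phi} = \inv{\psi} \circ \inv{\phi}$ already carried out in the proof that $\DcpoOp(\C)$ is involutive. Next I would compute $\inv{\phi}^n(\bot)$. Since the base object of iteration is $\bot$ and $\xi^{-1} = (-)^\dagger$ is strict (by the earlier Lemma, the dagger is strict), we have $\xi^{-1}(\bot) = \bot^\dagger = \bot$, so $\inv{\phi}^n(\bot) = \xi(\phi^n(\bot)) = (\phi^n(\bot))^\dagger$.

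Then I would take suprema. Because $\xi = (-)^\dagger$ is continuous (again by the Lemma), it commutes with suprema of directed sets, giving
$$
\fix \inv{\phi} = \sup_{n \in \omega}\{\inv{\phi}^n(\bot)\} = \sup_{n \in \omega}\{(\phi^n(\bot))^\dagger\} = \left(\sup_{n \in \omega}\{\phi^n(\bot)\}\right)^\dagger = (\fix \phi)^\dagger,
$$
which is exactly the required identity. Here the monotone-dagger hypothesis does all the real work indirectly: it is what the Lemma converts into continuity and strictness of the dagger, and those two properties are precisely what licenses pulling $(-)^\dagger$ through the supremum and through $\bot$.

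The main obstacle I anticipate is purely bookkeeping rather than conceptual: one must be careful that $\xi$ genuinely coincides with the dagger on \emph{both} the domain and codomain hom-objects of $\phi$, so that the telescoping in the induction is valid and the strictness/continuity appeals are to the actual dagger rather than to an abstract order isomorphism. Since $\phi$ is an endofunctional on a single hom-object $\C(X,Y)$, its conjugate is an endofunctional on $\inv{\C(X,Y)} = \C^{\op}(X,Y)$, and $\xi_{\C(X,Y)} = \xi_{\C^{\op}(X,Y)} = (-)^\dagger$ by construction, so this matches up cleanly; I would just state this alignment explicitly to avoid any ambiguity about which hom-object each fixed point lives in.
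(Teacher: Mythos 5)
Your proposal is correct and follows essentially the same route as the paper's own proof: unfold $\inv{\phi} = \xi \circ \phi \circ \xi^{-1}$, telescope to get $\inv{\phi}^n = \xi \circ \phi^n \circ \xi^{-1}$, use strictness of the dagger to handle $\xi^{-1}(\bot) = \bot$, and use continuity of the dagger (both from the earlier Lemma) to pull $(-)^\dagger$ through the supremum. The only difference is presentational — you make the strictness and hom-object bookkeeping explicit where the paper leaves it implicit — so there is nothing to add.
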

\begin{proof}
  The proof applies the exact same construction as in \cite{Kaarsgaard2017},
  since being a \DCPO-$\dagger$-category suffices, and the 
  constructed fixed point adjoint turns out to be the exact same. Let $\C(X,Y) 
  \tot{\phi} \C(X,Y)$ be a functional. Since
  $\inv{\phi} = \xi_{\C(X,Y)} \circ \phi \circ \xi_{\C(X,Y)}^{-1}$,
  $$\bar{\phi}^n = \left(\xi_{\C(X,Y)} \circ \phi \circ
  \xi_{\C(X,Y)}^{-1}\right)^n = \xi_{\C(X,Y)} \circ \phi^n \circ
  \xi_{\C(X,Y)}^{-1}$$ and so
  \begin{align*}
    \fix \inv{\phi} & = \sup \{\bar{\phi}^n(\bot_{Y,X})\}_{n \in \omega}
    = \sup \{\phi^n(\bot_{Y,X}^\dagger)^\dagger\} = \sup 
    \{\phi^n(\bot_{X,Y})^\dagger\} \\ & = \sup 
    \{\phi^n(\bot_{X,Y})\}^\dagger = (\fix \phi)^\dagger
  \end{align*}
  as desired.\qed
\end{proof}

This theorem is somewhat surprising, as the conjugate came out of the
involutive monoidal structure on $\DcpoOp(\C)$, which is not specifically
related to the presence of fixed points. As previously noted, had \C{} been
enriched in another category \V, we would still be able to construct a category
$\V\mathbf{Op}(\C)$ of \V-functionals with the \emph{exact same} involutive
structure.

As regards recursion, this theorem underlines the slogan that
\emph{reversibility is a local phenomenon}: To construct the inverse
to a recursively defined morphism $\fix \phi$, it suffices to invert the local morphism $\phi$ at each step (which is essentially what is done by the conjugate $\inv{\phi}$) in order to construct the global inverse $(\fix \phi)^\dag$.

Parametrized functionals and their external fixed points are also interesting
to consider in this setting, as some examples of \DCPO-\dag-categories (\eg,
\PInj) fail to have an internal hom. For example, in a dagger category with
objects $L(X)$ corresponding to ``lists of $X$'' (usually constructed as the
fixed point of a suitable functor), one could very reasonably construe the
usual map-function not as a higher-order function, but as a family of morphisms
$LX \tot{\text{map}\langle f \rangle} LY$ indexed by $X \tot{f} Y$ -- or, in
other words, as a functional $\C(X,Y) \tot{\text{map}} \C(LX,LY)$. Indeed, this
is how certain higher-order behaviours are mimicked in the reversible
functional programming language Theseus (see also
Section~\ref{sec:applications}).

To achieve such parametrized fixed points of functionals, we naturally need a
parametrized fixed point operator on $\DcpoOp(\C)$ satisfying the appropriate
equations -- or, in other words, we need $\DcpoOp(\C)$ to be an \emph{iteration
category}. That $\DcpoOp(\C)$ is such an iteration category follows immediately
by its definition (\ie, since $\DcpoOp(\C)$ is a full subcategory of \DCPO, we
can define a parametrized fixed point operator in $\DcpoOp(\C)$ to be precisely 
the one in \DCPO), noting that parametrized fixed points preserve continuity.

\begin{lemma}
  $\DcpoOp(\C)$ is an iteration category.
\end{lemma}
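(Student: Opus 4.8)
The plan is to show that $\DcpoOp(\C)$ inherits its iteration-category structure directly from \DCPO, by exhibiting it as a full subcategory closed under the operations that matter. Recall that by Definition the objects of $\DcpoOp(\C)$ are closed under the cartesian product and contain the terminal object $1$, and that all continuous maps between such objects are included as morphisms. Hence $\DcpoOp(\C)$ is a \emph{full} cartesian subcategory of \DCPO. The parametrized fixed point operator on \DCPO{} sends a morphism $X \times Y \tot{\psi} X$ to $Y \tot{\pfix \psi} X$; the crux is that when $X$ and $Y$ are objects of $\DcpoOp(\C)$, both the domain $Y$ and the codomain $X$ of $\pfix \psi$ already lie in $\DcpoOp(\C)$, so the operator restricts to an operator on $\DcpoOp(\C)$ provided $\pfix \psi$ is again continuous.

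Concretely, I would proceed in three steps. First, I would observe that for any object $X$ of \DCPO{} and any object $Y$, the parametrized fixed point $\pfix \psi$ given by $(\pfix \psi)(p) = \sup_{n \in \omega}\{\psi^n(\bot_X, p)\}$ (as recalled in Section~\ref{sub:iteration_categories}) is a continuous function $Y \to X$; this is the standard fact that parametrized fixed points preserve continuity. Since $\DcpoOp(\C)$ contains \emph{all} continuous maps between its objects, $\pfix \psi$ is therefore a morphism of $\DcpoOp(\C)$ whenever $\psi$ is. Second, I would note that the objects $X, Y, X \times Y$ involved remain within $\DcpoOp(\C)$ by the closure of its object-collection under products, so the operator is genuinely well-defined on $\DcpoOp(\C)$ and agrees on the nose with the one in \DCPO. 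Third, because this parametrized fixed point operator is literally the restriction of the one on \DCPO, it satisfies exactly the same identities: any identity of the parametrized fixed point operator that holds in \DCPO{} is an equation between composites built from $\pfix$, projections, pairing, and composition, and each such equation, holding for all objects and morphisms of \DCPO, holds a fortiori for the objects and morphisms of the full subcategory $\DcpoOp(\C)$. This is precisely the defining condition of an iteration category.

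The main (and only genuine) obstacle is verifying well-definedness, namely that applying $\pfix$ does not lead outside $\DcpoOp(\C)$ — but this is exactly what the closure conditions in the definition of $\DcpoOp(\C)$ guarantee, together with the continuity-preservation of $\pfix$. Everything else is formal: fullness ensures no morphisms are missing, and the inheritance of identities is immediate because equations valid universally in \DCPO{} specialize to any full subcategory closed under the relevant operations. I expect the entire argument to be short, essentially amounting to the remark already made in the surrounding text that $\DcpoOp(\C)$ is a full subcategory of \DCPO{} and that parametrized fixed points preserve continuity.
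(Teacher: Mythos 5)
Your proof is correct and follows exactly the paper's own (very brief) argument: $\DcpoOp(\C)$ is a full cartesian subcategory of \DCPO, so the parametrized fixed point operator of \DCPO{} restricts to it (since parametrized fixed points preserve continuity and the relevant objects stay in the subcategory), and all identities valid in \DCPO{} are then inherited, which is precisely the definition of an iteration category. Your write-up is somewhat more explicit about well-definedness than the paper, but the route is the same.
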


For functionals of the form $\C(X,Y) \times \C(P,Q) \tot{\psi} \C(X,Y)$, we can
make a similar definition of a \emph{parametrized fixed point adjoint}:

\begin{definition}
  A functional $\C(X,Y) \times \C(P,Q) \tot{\psi_\ddagger} \C(X,Y)$ is
  \emph{parametrized fixed point adjoint} to a functional $\C(X,Y) \times
  \C(P,Q) \tot{\psi} \C(X,Y)$ iff $(\pfix \psi)(p)^\dagger = (\pfix
  \psi_\ddagger)(p^\dag)$.
\end{definition}

We can now show a similar theorem for parametrized fixed points of functionals
and their conjugates:

\begin{theorem}\label{thm:pfix_rev}
  Every functional is parametrized fixed point adjoint to its conjugate.
\end{theorem}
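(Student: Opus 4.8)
The plan is to reduce the parametrized statement to the already-established unary theorem (``Every functional is fixed point adjoint to its conjugate'') by slicing away the parameter. Recall that the parametrized fixed point in $\DcpoOp(\C)$ is inherited from \DCPO{} and satisfies $(\pfix \psi)(p) = \fix \psi(-,p)$, where $\psi(-,p)$ is the unary functional obtained by fixing the second argument at $p$. Thus for a functional $\C(X,Y) \times \C(P,Q) \tot{\psi} \C(X,Y)$ and a parameter $p \in \C(P,Q)$, it suffices to relate the conjugate $\inv{\psi}$, sliced at $p^\dagger$, to the conjugate of the slice $\psi(-,p)$, and then invoke the unary result.

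The crucial step is to verify that conjugation commutes with slicing, in the precise sense that $\inv{\psi}(-, p^\dagger) = \inv{\psi(-,p)}$ as unary functionals $\C(Y,X) \to \C(Y,X)$. First I would unfold the definition of the conjugate on the product domain: since $\xi_{\Theta \times \Lambda} = \xi_\Theta \times \xi_\Lambda$ and $\xi_{\C(X,Y)} = (-)^\dagger$, the conjugate of $\psi$ acts as $\inv{\psi}(g,q) = \psi(g^\dagger, q^\dagger)^\dagger$. Slicing the second argument at $p^\dagger$ and using involutivity of the dagger (so that $(p^\dagger)^\dagger = p$) then gives $\inv{\psi}(g, p^\dagger) = \psi(g^\dagger, p)^\dagger$, which is exactly the value $\inv{\psi(-,p)}(g)$ of the conjugate of the sliced unary functional. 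The only subtlety is bookkeeping the $\xi$-component on the parameter factor correctly, so that the parameter is conjugated to $p^\dagger$ while the ``state'' variable is the one actually iterated.

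With the slicing identity in hand the conclusion is immediate:
\begin{align*}
  (\pfix \inv{\psi})(p^\dagger) &= \fix \inv{\psi}(-, p^\dagger) = \fix \inv{\psi(-,p)} \\
  &= (\fix \psi(-,p))^\dagger = ((\pfix \psi)(p))^\dagger,
\end{align*}
where the middle equality is the unary theorem applied to $\psi(-,p)$. I expect the only real obstacle to be the careful unfolding of the product conjugate in the middle paragraph; once that identity is pinned down, everything else is formal. Alternatively, one could bypass the reduction and prove $\inv{\psi}^n(\bot_{Y,X}, p^\dagger) = \psi^n(\bot_{X,Y}, p)^\dagger$ directly by induction on $n$ (using strictness of the dagger for the base case and involutivity for the step), then take suprema and commute the dagger past the supremum by continuity; but the reduction is cleaner and reuses the work already done.
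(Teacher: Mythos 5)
Your proof is correct, but it takes a genuinely different route from the paper's. The paper proves the parametrized statement from scratch: it shows by induction on $n$ that $\inv{\psi}^n(f,p) = \psi^n(f^\dag,p^\dag)^\dag$ for all $f$, $p$, and then takes suprema of the chains of iterates, using strictness of the dagger on $\bot$ and its continuity (Lemma 1) to push the dagger past the supremum --- in effect repeating, with a parameter carried along, the same induction that proved the unary theorem. You instead reuse the unary theorem by slicing: the identity $(\pfix \psi)(p) = \fix \psi(-,p)$ (recorded in Section~\ref{sub:iteration_categories}, and valid for the $\pfix$ on $\DcpoOp(\C)$ since that operator is by definition inherited from \DCPO) together with your observation that conjugation commutes with slicing, $\inv{\psi}(-,p^\dag) = \inv{\psi(-,p)}$, reduces everything to the unary case. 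Your verification of the slicing identity is sound: unfolding $\xi_{\Theta \times \Lambda} = \xi_\Theta \times \xi_\Lambda$ gives $\inv{\psi}(g,q) = \psi(g^\dag,q^\dag)^\dag$, hence $\inv{\psi}(g,p^\dag) = \psi(g^\dag,p)^\dag = \inv{\psi(-,p)}(g)$. The one hypothesis worth making explicit is that the slice $\psi(-,p)$ is continuous, so that it is a morphism of $\DcpoOp(\C)$ and the unary theorem applies; this is routine, since continuity of a function of two arguments in \DCPO{} implies continuity in each argument separately. What your approach buys is economy: no second induction, and the argument isolates exactly where the parameter meets the involution (it is conjugated once, at the slicing stage). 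What the paper's direct induction buys is self-containedness and visibility of the iterate-level identity, which makes the ``invert locally at every step'' reading of the theorem explicit also in the parametrized setting --- at the cost of duplicating the unary argument, which is precisely the duplication your reduction avoids.
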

\begin{proof}
Let $\C(X,Y) \times \C(P,Q) \tot{\psi} \C(X,Y)$ be a functional.
We start by showing that $\bar{\psi}^n(f,p) =
\psi^n(f^\dagger,p^\dagger)^\dagger$ for all $Y \tot{f} X$, $Q \tot{p} P$, and
$n \in \mathbb{N}$, by induction on $n$. For $n=0$ we have $$\bar{\psi}^0(f,p)
= f = f^{\dagger\dagger} =(f^\dagger)^\dagger =
\psi^0(f^\dagger,p^\dagger)^\dagger.$$ Assuming now the induction hypothesis
for some $n$, we have
\begin{align*}
\bar{\psi}^{n+1}(f,p) & = \bar{\psi}(\bar{\psi}^n(f,p),p) = 
\bar{\psi}(\psi^n(f^\dag,p^\dag)^\dag,p) = 
\psi(\psi^n(f^\dag,p^\dag)^{\dag\dag},p^\dag)^\dag \\
& = \psi(\psi^n(f^\dag,p^\dag),p^\dag)^\dag = \psi^{n+1}(f^\dag,p^\dag)^\dag
\end{align*}
Using this fact, we now get
\begin{align*} 
(\pfix \inv{\psi})(p^\dagger) & = \sup_{n \in \omega}
\{\bar{\psi}^n(\bot_{Y,X},p^\dagger)\} = \sup_{n \in \omega}
\{\psi^n(\bot_{Y,X}^\dagger,p^{\dag\dag})^\dagger\} \\ & = \sup_{n \in 
\omega} \{\psi^n(\bot_{X,Y},p)\}^\dagger = (\pfix \psi)(p)^\dagger
\end{align*}
which was what we wanted. \qed
\end{proof}

Again, this theorem highlights the local nature of reversibility, here in the
presence of additional parameters. We observe further the following highly
useful property of parametrized fixed points in $\DcpoOp(\C)$:

\begin{lemma}
Parametrized fixed points in $\DcpoOp(\C)$ preserve conjugation.
\end{lemma}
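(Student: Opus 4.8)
The statement to establish is that conjugation commutes with the parametrized fixed point operator of $\DcpoOp(\C)$: for every functional $\Theta \times \Lambda \tot{\psi} \Theta$ we have $\pfix(\inv{\psi}) = \inv{\pfix \psi}$ (as morphisms $\inv{\Lambda} \to \inv{\Theta}$). For the special case where $\Theta$ and $\Lambda$ are homset objects this is precisely Theorem~\ref{thm:pfix_rev} in disguise, so the real content of the lemma is to lift that result to \emph{all} objects of $\DcpoOp(\C)$, including units and products. The plan is to exploit the fact that the parametrized fixed point operator on $\DcpoOp(\C)$ is by definition inherited verbatim from \DCPO, so that I may reason pointwise on the underlying dcpos and transcribe the proof of Theorem~\ref{thm:pfix_rev}, replacing $(-)^\dag$ by the appropriate component of the family $\xi$.

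First I would record that for \emph{every} object $\Theta$ of $\DcpoOp(\C)$ the map $\xi_\Theta$ is a strict continuous order isomorphism. On a homset object $\xi_{\C(X,Y)} = (-)^\dag$, which is such by the earlier lemma (the dagger is an order isomorphism, and in particular continuous and strict); on the unit $\xi_1 = \id_1$; and on a product $\xi_{\Theta \times \Lambda} = \xi_\Theta \times \xi_\Lambda$, so the property propagates by a routine induction on object structure, since strict continuous isomorphisms are closed under finite products. In particular $\xi_\Theta^{-1}(\bot) = \bot$ and $\xi_\Theta$ preserves suprema of directed sets, for every $\Theta$. I would also note that no symmetry coherence is needed to interpret $\inv{\psi}$, since $\inv{\Theta \times \Lambda} = \inv{\Theta} \times \inv{\Lambda}$ holds on the nose by definition, with $\xi_{\Theta \times \Lambda} = \xi_\Theta \times \xi_\Lambda$.

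Next, unfolding $\inv{\psi} = \xi_\Theta \circ \psi \circ (\xi_\Theta^{-1} \times \xi_\Lambda^{-1})$, I would prove by induction on $n$ that
$$\inv{\psi}^{\,n}(a,b) = \xi_\Theta\big(\psi^n(\xi_\Theta^{-1}(a),\, \xi_\Lambda^{-1}(b))\big)$$
for all $a \in \inv{\Theta}$, $b \in \inv{\Lambda}$; the base case is immediate from $\xi_\Theta \circ \xi_\Theta^{-1} = \id$, and the inductive step cancels an inner $\xi_\Theta^{-1}\xi_\Theta$ exactly as in the proof of Theorem~\ref{thm:pfix_rev}. Taking the supremum over $n$, using strictness to rewrite $\xi_\Theta^{-1}(\bot_{\inv{\Theta}}) = \bot_\Theta$ and continuity of $\xi_\Theta$ to pull it outside the supremum, I obtain
$$(\pfix \inv{\psi})(b) = \xi_\Theta\Big(\sup_{n}\psi^n(\bot_\Theta,\, \xi_\Lambda^{-1}(b))\Big) = \xi_\Theta\big((\pfix\psi)(\xi_\Lambda^{-1}(b))\big) = (\inv{\pfix\psi})(b),$$
where the final equality is just the definition of the conjugate of the morphism $\pfix\psi$. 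As this holds at every $b$, the two morphisms coincide.

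The only step that is more than bookkeeping is the first one: establishing strictness and continuity of $\xi$ \emph{uniformly} across all objects, rather than merely at homsets where it reduces to the dagger, since it is precisely these two properties that license the pointwise manipulation of $\bot$ and of suprema in the last display. Once that is in place, the remainder is a direct transcription of the argument already carried out for Theorem~\ref{thm:pfix_rev}.
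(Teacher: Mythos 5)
Your proof is correct, but it takes a genuinely different route from the paper's. The paper disposes of this lemma in two lines as a direct corollary of Theorem~\ref{thm:pfix_rev}: for a functional $\C(X,Y) \times \C(P,Q) \tot{\psi} \C(X,Y)$ and $P \tot{p} Q$, it unfolds $\inv{\pfix \psi}(p) = (\pfix \psi)(p^\dag)^\dag$, applies Theorem~\ref{thm:pfix_rev} at the parameter $p^\dag$, and cancels the double dagger to obtain $(\pfix \inv{\psi})(p)$ --- no fresh induction at all. Note, however, that the paper's computation (like its proof of Theorem~\ref{thm:pfix_rev}) is carried out only for functionals between homset objects, which is the instance actually used later in the corollary on self-conjugate natural transformations. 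You instead re-run the inductive argument underlying Theorem~\ref{thm:pfix_rev} with the structure maps $\xi_\Theta$ in place of $(-)^\dag$, after first establishing by structural induction that $\xi_\Theta$ is a strict continuous order isomorphism on \emph{every} object of $\DcpoOp(\C)$ (homsets by the paper's first lemma on the dagger, the unit trivially, products componentwise). What this buys is genuine generality: your argument covers $\Theta \times \Lambda \tot{\psi} \Theta$ for arbitrary objects of $\DcpoOp(\C)$, i.e.\ the full breadth that the lemma's statement literally claims, whereas the paper's proof covers only the homset-shaped case. The cost is duplication: you reprove the content of Theorem~\ref{thm:pfix_rev} rather than reusing it; an economical middle path would be to state the $\xi$-generalized form of that theorem once (your displayed induction) and then conclude with the paper's two-line derivation, which works verbatim at that level of generality.
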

\begin{proof}
Let $\C(X,Y) \times \C(P,Q) \tot{\psi} \C(X,Y)$ be continuous, and $P \tot{p}
Q$. Then $\inv{\pfix \psi}(p) = (\xi \circ (\pfix \psi) \circ \xi^{-1})(p) =
(\pfix \psi)(p^\dagger)^\dagger = (\pfix \inv{\psi})(p)^{\dag\dag} = (\pfix
\inv{\psi})(p)$, so $\inv{\pfix \psi} = \pfix \inv{\psi}$.\qed
\end{proof}

Note that a lemma of this form only makes sense for parametrized fixed points,
as the usual fixed point of a functional $\C(X,Y) \tot{\phi} \C(X,Y)$ results
in a morphism $X \tot{\fix \phi} Y$ in \C{}, not a functional in $\DcpoOp(\C)$.
% subsection daggers_and_fixed_points (end)

\subsection{Naturality and self-conjugacy} % (fold)
\label{sub:naturality}
We now consider the behaviour of functionals and their parametrized fixed
points when they are natural. For example, given a natural family of
functionals $\C(FX,FY) \tot{\alpha_{X,Y}} \C(GX,GY)$ natural in $X$ and $Y$
(for dagger endofunctors $F$ and $G$ on \C), what does it mean for such a
family to be well-behaved with respect to the dagger on \C? We would certainly
want that such a family preserves the dagger, in the sense that
$\alpha_{X,Y}(f)^\dag = \alpha_{Y,X}(f^\dag)$ in each component $X,Y$. It turns
out that this, too, can be expressed in terms of conjugation of functionals.

\begin{lemma}\label{lem:self_conjugate}
  Let $\C(FX,FY) \tot{\alpha_{X,Y}} \C(GX,GY)$ be a family of functionals
  natural in $X$ and $Y$. Then $\alpha_{X,Y}(f)^\dag = \alpha_{Y,X}(f^\dag)$
  for all $X \tot{f} Y$ iff $\alpha_{X,Y} = \inv{\alpha_{Y,X}}$.
\end{lemma}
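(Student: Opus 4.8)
The plan is to reduce both sides of the biconditional to a single pointwise identity by unfolding the definition of the conjugate and then invoking involutivity of the dagger ($f^{\dag\dag} = f$). First I would pin down the type of $\inv{\alpha_{Y,X}}$: since $\alpha_{Y,X}$ is a morphism $\C(FY,FX) \to \C(GY,GX)$ of $\DcpoOp(\C)$, its conjugate has domain $\inv{\C(FY,FX)} = \C^{\op}(FY,FX) = \C(FX,FY)$ and codomain $\inv{\C(GY,GX)} = \C^{\op}(GY,GX) = \C(GX,GY)$. Hence $\inv{\alpha_{Y,X}}$ and $\alpha_{X,Y}$ are parallel morphisms $\C(FX,FY) \to \C(GX,GY)$, and the equation $\alpha_{X,Y} = \inv{\alpha_{Y,X}}$ typechecks as an equality of functionals.

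Next I would compute $\inv{\alpha_{Y,X}}$ explicitly. By the definition of the involution on morphisms together with $\xi_{\C(A,B)} = (-)^\dag$ (whose inverse is again $(-)^\dag$, as $f^{\dag\dag}=f$), we obtain for each $f \in \C(FX,FY)$
\[
  \inv{\alpha_{Y,X}}(f) = \bigl(\xi_{\C(GY,GX)} \circ \alpha_{Y,X} \circ \xi_{\C(FY,FX)}^{-1}\bigr)(f) = \alpha_{Y,X}(f^\dag)^\dag .
\]
The two implications then follow by a single application of the dagger. For the forward direction I would assume $\alpha_{X,Y}(f)^\dag = \alpha_{Y,X}(f^\dag)$ for all $f$, apply the dagger to both sides, and use $f^{\dag\dag}=f$ to get $\alpha_{X,Y}(f) = \alpha_{Y,X}(f^\dag)^\dag = \inv{\alpha_{Y,X}}(f)$, which is the asserted equality of functionals. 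For the converse I would assume $\alpha_{X,Y} = \inv{\alpha_{Y,X}}$, evaluate at $f$ to get $\alpha_{X,Y}(f) = \alpha_{Y,X}(f^\dag)^\dag$, and again apply the dagger to recover $\alpha_{X,Y}(f)^\dag = \alpha_{Y,X}(f^\dag)$.

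The only genuine subtlety — and the step I would treat most carefully — is the index bookkeeping and the direction of the isomorphisms $\xi$: one must verify that the swap $X \leftrightarrow Y$ in the subscript of $\alpha$ is exactly what conjugation of the hom-objects produces, so that feeding an argument $f \in \C(FX,FY)$ into $\inv{\alpha_{Y,X}}$ really passes $f^\dag \in \C(FY,FX)$ to $\alpha_{Y,X}$. Naturality of the family plays no role in the equivalence itself; it belongs to the ambient setting (these functionals arise from natural transformations between dagger endofunctors), and I would note only that the biconditional is established componentwise for each pair $X,Y$.
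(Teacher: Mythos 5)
Your proposal is correct and takes essentially the same route as the paper's proof: both unfold the definition of the involution to get $\inv{\alpha_{Y,X}}(f) = \alpha_{Y,X}(f^\dag)^\dag$ and then obtain each direction of the biconditional by a single application of the dagger together with $f^{\dag\dag} = f$. Your additional type-checking of $\inv{\alpha_{Y,X}}$ and the remark that naturality plays no role are sound but implicit in the paper's terser argument.
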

\begin{proof}
  Suppose $\alpha_{X,Y}(f)^\dag = \alpha_{Y,X}(f^\dag)$. Then $\alpha_{X,Y}(f) 
  = \alpha_{X,Y}(f)^{\dag\dag} = \alpha_{Y,X}(f^\dag)^\dag = 
  \inv{\alpha_{Y,X}}(f)$, so $\alpha_{X,Y} = \inv{\alpha_{Y,X}}$. Conversely, 
  assuming $\alpha_{X,Y} = \inv{\alpha_{Y,X}}$ we then have for all $X \tot{f} 
  Y$ that $\alpha_{X,Y}(f) = \alpha_{Y,X}(f^\dag)^\dag$, so 
  $\alpha_{X,Y}(f)^\dag = \alpha_{Y,X}(f^\dag)^{\dag\dag} = 
  \alpha_{Y,X}(f^\dag)$. \qed
\end{proof}

If a natural transformation $\alpha$ satisfies $\alpha_{X,Y} =
\inv{\alpha_{Y,X}}$ in all components $X,Y$, we say that it is
\emph{self-conjugate}. An important example of a self-conjugate natural
transformation is the \emph{dagger trace operator}, as detailed in the
following example.

\begin{example}
  A trace operator~\cite{JoyalStreetVerity1996} on a braided monoidal category
  \D{} is family of functionals $$\D(X \otimes U, Y \otimes U) \tot{\Tr_{X,Y}^U}
  \D(X,Y)$$ subject to certain equations (naturality in $X$ and $Y$,
  dinaturality in $U$, etc.). Traces have been used to model features from
  partial traces in tensorial vector spaces~\cite{HasegawaHofmannPlotkin2008} 
  to tail recursion in programming
  languages~\cite{Abramsky1996,BentonHyland2003,Hasegawa1997}, and occur
  naturally in tortile monoidal categories~\cite{JoyalStreetVerity1996} and
  unique decomposition categories~\cite{Haghverdi2000,Hoshino2012}.
  
  A \emph{dagger trace operator} on a dagger category (see, \eg,
  \cite{Selinger2011}) is precisely a trace operator on a dagger monoidal
  category (\ie, a monoidal category where the monoidal functor is a dagger
  functor) that satisfies $\Tr_{X,Y}^U(f)^\dag = \Tr_{Y,X}^U(f^\dag)$ in all
  components $X,Y$. Such traces have been used to model reversible tail
  recursion in reversible programming
  languages~\cite{James2012,James2014,Kaarsgaard2017}, and also occur in the
  \emph{dagger compact closed categories} (see, \eg, \cite{Selinger2012}) used
  to model quantum theory. In light of Lemma~\ref{lem:self_conjugate},
  dagger traces are important examples of self-conjugate natural
  transformations on dagger categories.
\end{example}

Given the connections between (di)naturality and parametric
polymorphism~\cite{Wadler1989,Bainbridge1990}, one would wish that parametrized
fixed points preserve naturality. Luckily, this does turn out to be the case,
as shown in the proof of the following theorem.

\begin{theorem}\label{thm:nat}
  If $\C(FX,FY) \times \C(GX,GY) \tot{\alpha_{X,Y}} \C(FX,FY)$ is natural in 
  $X$ and $Y$, so is its parametrized fixed point.
\end{theorem}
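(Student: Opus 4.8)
The plan is to prove that the naturality square for $\pfix\alpha$ commutes by reducing it, one parameter at a time, to a fixed-point fusion (uniformity) argument. First I would make the naturality hypothesis explicit. Both $(X,Y)\mapsto\C(FX,FY)\times\C(GX,GY)$ and $(X,Y)\mapsto\C(FX,FY)$ are functors $\C^{\op}\times\C\to\DcpoOp(\C)$, and a morphism $(a,b)\colon(X,Y)\to(X',Y')$ in $\C^{\op}\times\C$ (that is, $X'\tot{a}X$ and $Y\tot{b}Y'$ in \C) acts by the reindexing maps $\rho(h)=Fb\circ h\circ Fa$ on $\C(FX,FY)$ and $\sigma(k)=Gb\circ k\circ Ga$ on $\C(GX,GY)$. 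Naturality of $\alpha$ is then the identity $\rho\circ\alpha_{X,Y}=\alpha_{X',Y'}\circ(\rho\times\sigma)$, and what I must establish is the corresponding identity $\rho\circ\pfix\alpha_{X,Y}=\pfix\alpha_{X',Y'}\circ\sigma$ for the parametrized fixed points.

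The key observation is that the reindexing maps are not merely continuous (being built from composition, which is continuous by \DCPO-enrichment) but also \emph{strict}: since composition in a \DCPO-\dag-category is strict, $\rho(\bot)=Fb\circ\bot\circ Fa=\bot$, and likewise for $\sigma$. This is exactly where the enrichment in \DCPO! rather than bare \DCPO{} is used. Fixing a parameter $k\in\C(GX,GY)$ and writing $\phi=\alpha_{X,Y}(-,k)$ and $\phi'=\alpha_{X',Y'}(-,\sigma(k))$, the naturality identity specialised to the second argument yields the intertwining relation $\rho\circ\phi=\phi'\circ\rho$; indeed $\rho(\phi(h))=\rho(\alpha_{X,Y}(h,k))=\alpha_{X',Y'}(\rho(h),\sigma(k))=\phi'(\rho(h))$.

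From here the argument is the standard fixed-point fusion. A routine induction on $n$ upgrades the intertwining relation to $\rho\circ\phi^n=(\phi')^n\circ\rho$, and then continuity and strictness of $\rho$ let me push it through the Kleene supremum:
$$
\rho(\fix\phi)=\rho\!\left(\sup_{n\in\omega}\phi^n(\bot)\right)=\sup_{n\in\omega}\rho(\phi^n(\bot))=\sup_{n\in\omega}(\phi')^n(\rho(\bot))=\sup_{n\in\omega}(\phi')^n(\bot)=\fix\phi'.
$$
Unwinding the definitions $(\pfix\alpha_{X,Y})(k)=\fix\phi$ and $(\pfix\alpha_{X',Y'})(\sigma(k))=\fix\phi'$, this is precisely $\rho\circ\pfix\alpha_{X,Y}=\pfix\alpha_{X',Y'}\circ\sigma$, as required. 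I do not anticipate a genuine obstacle: the only things to watch are the bookkeeping of the variances of $F$ and $G$ in the reindexing maps (contravariant in the domain slot, covariant in the codomain slot), and the invocation of \emph{strictness} — not just continuity — of those maps, since continuity alone would preserve only suprema of non-empty directed sets and would not guarantee $\rho(\bot)=\bot$.
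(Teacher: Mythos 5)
Your proposal is correct and takes essentially the same route as the paper's proof: an induction on the Kleene iterates that uses naturality of $\alpha$ in the inductive step, strictness of composition to get $\rho(\bot)=\bot$ in the base case, and continuity of composition to push the reindexing maps through the directed supremum. The only difference is packaging --- you fix the parameter $k$ and phrase the argument as fixed-point fusion via the intertwining relation $\rho\circ\phi=\phi'\circ\rho$, whereas the paper inducts directly on the parametrized iterates $\alpha^n_{X,Y}(\bot_{X,Y},p)$; the underlying computation is identical.
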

\begin{proof}
  See appendix.\qed
\end{proof}
This theorem can be read as stating that, just like reversibility, a recursive
polymorphic map can be obtained from one that is only locally polymorphic. 
Combining this result with Lemma~\ref{lem:self_conjugate} regarding
self-conjugacy, we obtain the following corollary.
\begin{corollary}
If $\C(FX,FY) \times \C(GX,GY) \tot{\alpha_{X,Y}} \C(FX,FY)$ is a
self-conjugate natural transformation, so is $\pfix \alpha_{X,Y}$.
\end{corollary}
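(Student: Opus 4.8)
The plan is to obtain the corollary directly from the two results that immediately precede it: Theorem~\ref{thm:nat} and the lemma stating that parametrized fixed points in $\DcpoOp(\C)$ preserve conjugation. Abbreviate $\beta_{X,Y} = \pfix \alpha_{X,Y}$, so that $\beta_{X,Y}$ is a functional $\C(GX,GY) \tot{\beta_{X,Y}} \C(FX,FY)$. Unfolding the definition given just after Lemma~\ref{lem:self_conjugate}, to prove that $\beta$ is a \emph{self-conjugate natural transformation} I must establish exactly two things: that $\beta$ is natural in $X$ and $Y$, and that it satisfies the self-conjugacy equation $\beta_{X,Y} = \inv{\beta_{Y,X}}$ in every component.

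Naturality is handed to me for free: since $\alpha_{X,Y}$ is natural in $X$ and $Y$ by hypothesis, Theorem~\ref{thm:nat} gives at once that its parametrized fixed point $\pfix \alpha_{X,Y} = \beta_{X,Y}$ is natural in $X$ and $Y$ as well, with no additional work required.

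For the self-conjugacy equation I would first confirm that it is well-typed: the involution on $\DcpoOp(\C)$ sends $\C(A,B)$ to $\C(B,A)$, so $\inv{\beta_{Y,X}}$ has domain $\C(GX,GY)$ and codomain $\C(FX,FY)$, matching $\beta_{X,Y}$ precisely. I then compute, applying the conjugation-preservation lemma (with $\psi = \alpha_{Y,X}$) followed by the self-conjugacy hypothesis $\inv{\alpha_{Y,X}} = \alpha_{X,Y}$:
$$
  \inv{\beta_{Y,X}} = \inv{\pfix \alpha_{Y,X}} = \pfix \inv{\alpha_{Y,X}} = \pfix \alpha_{X,Y} = \beta_{X,Y},
$$
which is exactly the required equation.

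Since the entire computational burden is already discharged by Theorem~\ref{thm:nat} and the conjugation-preservation lemma, I expect no genuine obstacle here beyond keeping the indices and variances straight -- in particular, making sure the index swap $X \leftrightarrow Y$ in $\inv{\beta_{Y,X}}$ is precisely the one introduced (and hence cancelled) by the involution, so that the two sides of the self-conjugacy equation genuinely share a type. If a pointwise formulation is preferred, one may instead route through Lemma~\ref{lem:self_conjugate}, which recasts self-conjugacy of $\beta$ as $(\pfix \alpha_{X,Y})(p)^\dag = (\pfix \alpha_{Y,X})(p^\dag)$; this is then exactly the statement of Theorem~\ref{thm:pfix_rev} applied to $\alpha_{X,Y}$, once self-conjugacy is used to rewrite $\inv{\alpha_{X,Y}}$ as $\alpha_{Y,X}$. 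Either route closes the argument.
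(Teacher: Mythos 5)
Your proof is correct and is essentially the paper's own argument: both derive naturality from Theorem~\ref{thm:nat} and self-conjugacy from the conjugation-preservation lemma combined with the hypothesis $\alpha_{X,Y} = \inv{\alpha_{Y,X}}$, the only difference being that the paper writes the chain as $\inv{\pfix \alpha_{X,Y}} = \pfix \inv{\alpha_{X,Y}} = \pfix \alpha_{Y,X}$ (\ie, with the roles of $X$ and $Y$ swapped relative to your computation). Your explicit type-check of the involution and the alternative route via Theorem~\ref{thm:pfix_rev} are sound but not needed.
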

\begin{proof}
  If $\alpha_{X,Y} = \inv{\alpha_{Y,X}}$ for all $X,Y$ then also
  $\pfix \alpha_{X,Y} = \pfix \inv{\alpha_{Y,X}}$, which is further natural in 
  $X$ and $Y$ by Theorem~\ref{thm:nat}. But then $\inv{\pfix \alpha_{X,Y}} = 
  \pfix \inv{\alpha_{X,Y}} = \pfix \alpha_{Y,X}$, as parametrized fixed points 
  preserve conjugation. \qed
\end{proof}
% subsection naturality (end)
% section _dcpo_dagger_categories (end)

\section{Applications and future work} % (fold)
\label{sec:applications}

\paragraph*{Reversible programming languages} % (fold)
\label{par:reversible_programming_languages}
Theseus~\cite{James2014} is a typed reversible functional programming language
similar in syntax and spirit to Haskell. It has support for recursive data
types, as well as reversible tail recursion using so-called \emph{typed
iteration labels} as syntactic sugar for a dagger trace operator. Theseus is
based on the $\Pi$-family of reversible combinator calculi~\cite{James2012},
which bases itself on dagger traced symmetric monoidal categories augmented
with a certain class of algebraically $\omega$-compact functors. 

\iffalse
A highly useful feature of Theseus is the support for parametrized functions,
\ie, families of maps that depend statically on functions passed as parameters;
this gives many of the benefits of higher-order programming, but without the
headaches of having to consider higher-order reversible functions.
\fi

Theseus also supports \emph{parametrized functions}, that is, families of
reversible functions indexed by reversible functions of a given type, with the
proviso that parameters must be passed to parametrized maps statically. For
example, (if one extended Theseus with polymorphism) the reversible map
function would have the signature $\mathit{map} :: (a \leftrightarrow b) \to
([a] \leftrightarrow [b])$, and so $\mathit{map}$ is not in itself a reversible
function, though $\mathit{map}\ \langle f \rangle$ is (for some suitable
function $f$ passed statically). This gives many of the benefits of
higher-order programming, but without the headaches of higher-order reversible
programming.

The presented results show very directly that we can extend Theseus with a
fixed point operator for general recursion while maintaining desirable
inversion properties, rather than making do with the simpler tail recursion.
Additionally, the focus on the continuous functionals of \C{} given by the
category $\DcpoOp(\C)$ also highlights the feature of parametrized functions in
Theseus, and our results go further to show that even parametrized functions
that use general recursion not only have desirable inversion properties,
but also preserve naturality, the latter of which is useful for extending Theseus with parametric polymorphism.
% paragraph reversible_programming_languages (end)

\paragraph*{Quantum programming languages} % (fold)
\label{par:quantum_computing}
An interesting possibility as regards quantum programming languages is the
category $\CPS(\FHilb)$ (see \cite{Coecke2016} for details on the \CPS-construction), which is dagger compact closed and equivalent to the
category of finite-dimensional $C^*$-algebras and completely positive
maps~\cite{Coecke2016}. Since finite-dimensional $C^*$-algebras are
specifically von Neumann algebras, it follows (see
\cite{Cho2014,Rennela2014}) that this category is enriched in the category of
\emph{bounded} directed complete partial orders; and since it inherits the
dagger from \FHilb{} (and is locally ordered by the pointwise extension of the 
Löwner order restricted to positive operators), the dagger structure is
monotone, too. As such, the presented results ought to apply in this case as 
well -- modulo concerns of boundedness -- though this warrants more careful
study.
% paragraph quantum_computing (end)

\paragraph*{Dagger traces in \DCPO-\dag-categories}
Given a suitable monoidal tensor (\eg, one with the zero object as tensor unit)
and a partial additive structure on morphisms, giving the category the structure of a \emph{unique decomposition category}~\cite{Haghverdi2000,Hoshino2012}, a trace operator can be canonically constructed. In previous work~\cite{Kaarsgaard2017}, the author (among others)
demonstrated that a certain class of \DCPO-\dag-categories, namely join inverse
categories, had a dagger trace under suitably mild assumptions. It is
conjectured that this theorem may be generalized to other \DCPO-\dag-categories
that are not necessarily inverse categories, again provided that certain
assumptions are satisfied.

\paragraph*{Involutive iteration categories} % (fold)
\label{par:involutive_iteration_theories}
As it turned out that the category $\DcpoOp(\C)$ of continuous functionals on
\C{} was both involutive and an iteration category, an immediate question to
ask is how the involution functor ought to interact with parametrized fixed 
points in the general case. A remarkable fact of iteration categories is that
they are defined to be cartesian categories that satisfy all equations of
parametrized fixed points that hold in the category $\mathbf{CPO}_m$ of
$\omega$-complete partial orders and \emph{monotone} functions, yet also have a
complete (though infinite) equational axiomatization~\cite{Esik2015}.

We have provided an example of an interaction between
parametrized fixed points and the involution functor here, namely that
$\DcpoOp(\C)$ satisfies $\inv{\pfix \psi} = \pfix \inv{\psi}$. It could be
interesting to search for examples of involutive iteration categories in the
wild (as candidates for a semantic definition), and to see if Ésik's
axiomatization could be extended to accomodate for the involution functor in
the semantic category.
% paragraph involutive_iteration_theories (end)

% section applications_and_future_work (end)

\section{Conclusion and related work} % (fold)
\label{sec:conclusion} 
We have developed a notion of \DCPO-categories with a monotone dagger structure
(of which \PInj{}, \Rel{}, and \DStoch{} are examples, and $\CPS(\FHilb)$ is
closely related), and shown that these categories can be taken to be enriched
in an induced involutive monoidal category of continuous functionals. With
this, we were able to account for (ordinary and parametrized) fixed point
adjoints as arising from conjugation of the functional in the induced
involutive monoidal category, to show that parametrized fixed points preserve
conjugation and naturality, and that natural transformations that preserve the
dagger are precisely those that are self-conjugate. We also described a number
of potential applications in connection with reversible and quantum computing.

A great deal of work has been carried out in recent years on the domain theory
of quantum computing, with noteworthy results in categories of von Neumann
algebras (see, \eg, \cite{Rennela2014,Cho2014,Jacobs2015,Cho2015}). Though the
interaction between dagger structure and the domain structure on homsets was
not the object of study, Heunen considers the similarities and differences of
\FHilb{} and \PInj, also in relation to domain structure on homsets, in
\cite{Heunen2013}, though he also notes that \FHilb{} fails to enrich in
domains as composition is not even monotone (this is not to say that domain
theory and quantum computing do not mix; only that \FHilb{} is the wrong
category to consider for this purpose). Finally, dagger traced symmetric
monoidal categories, with the dagger trace serving as an operator for
reversible tail recursion, have been studied in connection with reversible
combinator calculi~\cite{James2012} and functional programming~\cite{James2014}.
% section conclusion_and_related_works (end)

\bibliographystyle{splncs04}
\bibliography{library}

\appendix

\newpage

\section{Omitted proofs} % (fold)
\label{sec:omitted_proofs}
\subsection{Proof of Theorem~\ref{thm:nat}} % (fold)
\label{sub:proof_of_theorem_ref_thm_nat}
Suppose that $\alpha$ is natural in $X$ and $Y$, \ie, the following diagram 
commutes for all $X,Y$.
\begin{center}
\begin{tikzpicture}
\node (CFXFYGXGY) {$\C(FX,FY) \times \C(GX,GY)$};
\node[right=20mm of CFXFYGXGY] (CFXFY) {$\C(FX,FY)$};
\node[below of=CFXFYGXGY] (CFX'FY'GX'GY') {$\C(FX',FY') \times \C(GX',GY')$};
\node[below of=CFXFY] (CFX'FY') {$\C(FX',FY')$};

\draw[->] (CFXFYGXGY) to node {\scriptsize $\alpha_{X,Y}$} (CFXFY);
\draw[->] (CFXFYGXGY) to node [swap] {\scriptsize $Ff \times Gf \circ - \circ
Fg \times Gg$} (CFX'FY'GX'GY');
\draw[->] (CFXFY) to node {\scriptsize $Ff \circ - \circ Fg$} (CFX'FY');
\draw[->] (CFX'FY'GX'GY') to node [swap] {\scriptsize $\alpha_{X',Y'}$}
(CFX'FY');
\end{tikzpicture}
\end{center}
Under this assumption, we start by showing naturality of $\alpha^n$ for all
$n \in \mathbb{N}$, \ie{}, for all $GX \tot{p} GY$
\begin{equation*}
  \alpha^n_{X',Y'}(\bot_{X',Y'}, Gf \circ p \circ Gg) = 
  Ff \circ \alpha^n_{X,Y}(\bot_{X,Y},p) \circ Fg
\end{equation*}
by induction on $n$. For $n = 0$ we have 
\begin{align*}
  \alpha^0_{X',Y'}(\bot_{X,Y}, Gf \circ p \circ Gg) & = \bot_{X',Y'} \\
  & = Ff \circ \bot_{X,Y} \circ Fg \\
  & = Ff \circ \alpha_{X,Y}^0(\bot_{X,Y},p) \circ Fg.
\end{align*}
where $Ff \circ \bot_{X,Y} \circ Fg = \bot_{X',Y'}$ by strictness of 
composition.
Assuming the induction hypothesis now for some $n$, we have
\begin{align*}
  \alpha^{n+1}_{X',Y'}(\bot_{X',Y'}, Gf \circ p \circ Gg) & =
  \alpha_{X',Y'}(\alpha^{n}_{X',Y'}(\bot_{X',Y'}, Gf \circ p \circ Gg), Gf 
  \circ p \circ Gg) \\
  & = \alpha_{X',Y'}(Ff \circ \alpha^n_{X,Y}(\bot_{X,Y},p) \circ Fg, 
  Gf \circ p \circ Gg) \\
  & = Ff \circ \alpha_{X,Y}(\alpha^n_{X,Y}(\bot_{X,Y},p), p) \circ Fg \\
  & = Ff \circ \alpha^{n+1}_{X,Y}(\bot_{X,Y},p) \circ Fg
\end{align*}
so $\alpha^n$ is, indeed, natural for any choice of $n \in \mathbb{N}$. But 
then
\begin{align*}
  (\pfix \alpha_{X',Y'})(Gf \circ p \circ Gg) & =
  \sup_{n \in \omega} \left\{\alpha^n_{X',Y'}(\bot_{X',Y'},Gf \circ p \circ 
  Gg)\right\} \\ 
  & = \sup_{n \in \omega} \left\{\alpha^n_{X',Y'}(Ff \circ \bot_{X,Y} 
  \circ Fg,Gf \circ p 
  \circ Gg)\right\} \\
  & = \sup_{n \in \omega} \left\{Ff \circ \alpha^n_{X,Y}(\bot_{X,Y},p) 
  \circ Fg \right\} \\
  & = Ff \circ \sup_{n \in \omega} \left\{\alpha^n_{X,Y}(\bot_{X,Y},p)
  \right\} \circ Fg \\
  & = Ff \circ (\pfix \alpha_{X,Y})(p) \circ Fg
\end{align*}
so $\pfix \alpha_{X,Y}$ is natural as well.\qed
% subsection proof_of_theorem_ref_thm_nat (end)
% section omitted_proofs (end)

\end{document}